\newtheorem{thm}{Theorem}
\newtheorem{cor}{Corollary}
\newtheorem{lem}{Lemma}
\newtheorem{rem}{Remark}
\newcommand{\A}{{\mathcal A}}
\newcommand{\U}{{\mathcal U}}
\newcommand{\es}{{\mathcal S}}
\newcommand{\D}{{\mathbb D}}
\def\be{\begin{equation}}
\def\ee{\end{equation}}
\newcommand{\bee}{\begin{enumerate}}
\newcommand{\eee}{\end{enumerate}}
\newcommand{\blem}{\begin{lem}}
\newcommand{\elem}{\end{lem}}
\newcommand{\bthm}{\begin{thm}}
\newcommand{\ethm}{\end{thm}}
\newcommand{\bcor}{\begin{cor}}
\newcommand{\ecor}{\end{cor}}
\newcommand{\beg}{\begin{example}}
\newcommand{\eeg}{\end{example}}
\newcommand{\begs}{\begin{examples}}
\newcommand{\eegs}{\end{examples}}
\newcommand{\bdefe}{\begin{defin}}
\newcommand{\edefe}{\end{defin}}
\newcommand{\bprob}{\begin{prob}}
\newcommand{\eprob}{\end{prob}}
\newcommand{\bei}{\begin{itemize}}
\newcommand{\eei}{\end{itemize}}
\newcommand{\bcon}{\begin{conj}}
\newcommand{\econ}{\end{conj}}
\newcommand{\bcons}{\begin{conjs}}
\newcommand{\econs}{\end{conjs}}
\newcommand{\bprop}{\begin{propo}}
\newcommand{\eprop}{\end{propo}}
\newcommand{\br}{\begin{rem}}
\newcommand{\er}{\end{rem}}
\newcommand{\brs}{\begin{rems}}
\newcommand{\ers}{\end{rems}}
\newcommand{\bo}{\begin{obser}}
\newcommand{\eo}{\end{obser}}
\newcommand{\bos}{\begin{obsers}}
\newcommand{\eos}{\end{obsers}}
\newcommand{\bpf}{\begin{pf}}
\newcommand{\epf}{\end{pf}}
\newcommand{\ba}{\begin{array}}
\newcommand{\ea}{\end{array}}
\newcommand{\beq}{\begin{eqnarray}}
\newcommand{\beqq}{\begin{eqnarray*}}
\newcommand{\eeq}{\end{eqnarray}}
\newcommand{\eeqq}{\end{eqnarray*}}
\begin{document}
\bibliographystyle{amsplain}

\title[Application of Grunsky coefficients in univalent functions]{Some application of Grunsky coefficients in the theory of univalent functions}

\author[M. Obradovi\'{c}]{Milutin Obradovi\'{c}}
\address{Department of Mathematics,
Faculty of Civil Engineering, University of Belgrade,
Bulevar Kralja Aleksandra 73, 11000, Belgrade, Serbia}
\email{obrad@grf.bg.ac.rs}

\author[N. Tuneski]{Nikola Tuneski}
\address{Department of Mathematics and Informatics, Faculty of Mechanical Engineering, Ss. Cyril and Methodius
University in Skopje, Karpo\v{s} II b.b., 1000 Skopje, Republic of North Macedonia.}
\email{nikola.tuneski@mf.edu.mk}

\subjclass[2020]{30C45, 30C50, 30C55}
\keywords{univalent functions, Grunsky coefficients, third logarithmic coefficient, coefficient difference, generalised Zalcman conjecture, second Hankel determinant, third Hankel determinant.}




\begin{abstract}
Let function  $f$ be  normalized, analytic and univalent in the unit disk $\D=\{z:|z|<1\}$ and  $f(z)=z+\sum_{n=2}^{\infty} a_n z^n$.
Using a method based on Grusky coefficients we study several problems over that class of univalent functions: upper bounds of the special case of the generalised Zalcman conjecture $|a_2a_3-a_4|$, of the third logarithmic coefficient, and of the second Hankel determinant for the logarithmic coefficients.
\end{abstract}

\maketitle

\medskip

\section{Introduction and definitions}

\medskip

Let $\mathcal{A}$ be the class of functions $f$ which are analytic  in the open unit disc $\D=\{z:|z|<1\}$ of the form
\be\label{e1}
f(z)=z+a_2z^2+a_3z^3+\cdots,
\ee
and let $\mathcal{S}$ be the subclass of $\mathcal{A}$ consisting of functions that are univalent in $\D$.

\medskip

Although the famous Bieberbach conjecture $|a_n|\le n$ for $n\ge2,$ was proved by de Branges  in 1985 \cite{Bra85}, a great many other problems concerning the coefficients $a_n$ remain open.

\medskip
One of them is the generalized Zalcman conjecture
$$|a_n a_m-a_{n+m-1}|\le(n-1)(m-1),$$ $n\ge2$, $m\ge2$, closed by Ma  for the class of starlike functions and for the class of univalent functions with real coefficients and by Ravichandran and Verma in \cite{ravi} for the classes of starlike and convex functions of given order and for the class of functions with bounded turning. In \cite{OB-NT-n2} the authors studied the generalized Zalcman conjecture for the class
\[\U=\left\{f\in\A:\left| \left(\frac{z}{f(z)}\right)^2 f'(z) -1\right|<1, \, z\in\D \right\}\]
and proved it for the cases $m=2$, $n=3$; and $m=2$, $n=4$. In this paper we prove the estimate $2.10064\ldots$ for the general class when $m=2$ and $n=3$ which is close to the conjectured value 2.

\medskip

Another, still open problem, is finding sharp estimates of logarithmic coefficient, $\gamma_n$, of a univalent function $f(z)=z+a_2z^2+a_3z^3+\cdots,$  defined by
  \be\label{log-co-1} F_f(z) := \log\frac{f(z)}{z}=2\sum_{n=1}^\infty \gamma_n z^n.\ee
Relatively little exact information is known about the coefficients. The natural conjecture $|\gamma_n|\le1/n$, inspired by the Koebe function (whose logarithmic coefficients are $1/n$) is false even in order of magnitude (see Duren \cite[Section 8.1]{duren}).
For the class $\es$ the sharp estimates of single logarithmic coefficients S are known only for $\gamma_1$ and $\gamma_2$, namely,
\[|\gamma_1|\le1\quad\mbox{and}\quad |\gamma_2|\le \frac12+\frac1e=0.635\ldots,\]
and are unknown for $n\ge3$. In this paper we give the estimate $|\gamma_3|\le0.5566178\ldots$ for the general class of univalent functions. This is an improvement of $|\gamma_3|\le0.7688\ldots$ obtained in \cite{OT-Turk}.
For the subclasses of univalent functions  the situation is not a great deal better. Only the estimates of the initial logarithmic coefficients are available. For details see \cite{cho}.

\medskip

The upper bound of the Hankel determinant is a problem rediscovered and extensively studied in recent years. Over the class $\A$ of functions $f(z)=z+a_2z^2+a_3z^3+\cdots$ analytic on the unit disk, this determinant is defined by
\[
        H_{q,n}(f) = \left |
        \begin{array}{cccc}
        a_{n} & a_{n+1}& \ldots& a_{n+q-1}\\
        a_{n+1}&a_{n+2}& \ldots& a_{n+q}\\
        \vdots&\vdots&~&\vdots \\
        a_{n+q-1}& a_{n+q}&\ldots&a_{n+2q-2}\\
        \end{array}
        \right |,
\]
where $q\geq 1$ and $n\geq 1$. The second order Hankel determinants is
\[
 H_{2,2}(f) =  \left |
        \begin{array}{cc}
        a_2 & a_3\\
        a_3 & a_4\\
        \end{array}
        \right | = a_2a_4-a_{3}^2,
\]
and for the logarithmic coefficients:
\begin{equation}\label{loghank}
H_{2,1}(F_f/2) = \gamma_1\gamma_3-\gamma_2^2 = \frac14\left( a_2a_4-a_3^2 + \frac{1}{12}a_2^4\right) .
\end{equation}
For the general class $\es$ of univalent functions in the class $\A$ tehre are very few results concerning the Hankel determinant.  The best known for the second order case  is due to Hayman (\cite{hayman-68}), saying  that $|H_2(n)|\le An^{1/2}$, where $A$ is an absolute constant, and that this rate of growth is the best possible.
There are much more results for the subclasses of $\es$ and some references  are \cite{ janteng-06,janteng-07, DTV-book}. Much less is known about the bounds of the modulus of the Hankel determinant for the logarithmic coefficients. In \cite{Kowalczyk-22,Kowalczyk-23} the authors considered the cases of starlike, convex, strongly starlike and strongly convex functions and found the best possible results. In this paper we give estimate for the second order Hankel determinant for the general class of univalent functions.

\medskip

For the study of the probles defined above  we will use method based on Grunsky coefficients.
In the proofs we will use mainly the notations and results given in the book of N. A. Lebedev (\cite{Lebedev}).

\medskip

Here are basic definitions and results.

\medskip

Let $f \in \mathcal{S}$ and let
\[
\log\frac{f(t)-f(z)}{t-z}=\sum_{p,q=0}^{\infty}\omega_{p,q}t^{p}z^{q},
\]
where $\omega_{p,q}$ are called Grunsky's coefficients with property $\omega_{p,q}=\omega_{q,p}$.
For those coefficients we have the next Grunsky's inequality (\cite{duren,Lebedev}):
\be\label{eq 4}
\sum_{q=1}^{\infty}q \left|\sum_{p=1}^{\infty}\omega_{p,q}x_{p}\right|^{2}\leq \sum_{p=1}^{\infty}\frac{|x_{p}|^{2}}{p},
\ee
where $x_{p}$ are arbitrary complex numbers such that last series converges.

\medskip

Further, it is well-known that if $f$ given by \eqref{e1}
belongs to $\mathcal{S}$, then also
\be\label{eq 5-mo-4}
f_{2}(z)=\sqrt{f(z^{2})}=z +c_{3}z^3+c_{5}z^{5}+\cdots
\ee
belongs to the class $\mathcal{S}$. Then for the function $f_{2}$ we have the appropriate Grunsky's
coefficients of the form $\omega_{2p-1,2q-1}$ and the inequality \eqref{eq 4} has the form:
\be\label{eq 6-mo-5}
\sum_{q=1}^{\infty}(2q-1) \left|\sum_{p=1}^{\infty}\omega_{2p-1,2q-1}x_{2p-1}\right|^{2}\leq \sum_{p=1}^{\infty}\frac{|x_{2p-1}|^{2}}{2p-1}.
\ee

\medskip

Here, and futher in the paper we omit the upper index (2) in  $\omega_{2p-1,2q-1}^{(2)}$ if compared with Lebedev's notation.

\medskip

From inequality \eqref{eq 6-mo-5}, when $x_{2p-1}=0$ and $p=2,3,\ldots$, we have
\begin{equation}\label{e7}
|\omega_{11} x_1 +\omega_{31} x_3 |^2 +3|\omega_{13} x_1 +\omega_{33} x_3 |^2 + 5|\omega_{15} x_1 +\omega_{35} x_3 |^2 \le |x_1|^2+\frac{|x_3|^2}{3}.
\end{equation}

\medskip

As it has been shown in \cite[p.57]{Lebedev}, if $f$ is given by \eqref{e1} then the coefficients $a_{2}$, $ a_{3}$, $ a_{4}$ and $a_5$ are expressed by Grunsky's coefficients  $\omega_{2p-1,2q-1}$ of the function $f_{2}$ given by
\eqref{eq 5-mo-4} in the following way:
\be\label{e9}
\begin{split}
a_{2}&=2\omega _{11},\\
a_{3}&=2\omega_{13}+3\omega_{11}^{2}, \\
a_{4}&=2\omega_{33}+8\omega_{11}\omega_{13}+\frac{10}{3}\omega_{11}^{3},\\
0&= 3\omega_{15}-3\omega_{11}\omega_{13}+\omega_{11}^3-3\omega_{33}.
\end{split}
\ee

\medskip

\section{Generalized Zalcman conjecture}

In this section we consider the generalized Zalcman conjecture in the case $n=2$ and $m=3$.

\begin{thm}
If $f\in\es$ is given by \eqref{e1}, then
\[|a_2a_3-a_4| \le 2.10064\ldots.\]
\end{thm}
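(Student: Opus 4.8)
The plan is to run the same Grunsky-coefficient machine as in the previous two proofs, but squeezing more out of the Grunsky inequality. First I would write $a_2a_3-a_4$ through the coefficients $\omega_{2p-1,2q-1}$ of $f_2=\sqrt{f(z^2)}$. From \eqref{e9} one gets $a_2a_3=2\omega_{11}(2\omega_{13}+3\omega_{11}^2)=4\omega_{11}\omega_{13}+6\omega_{11}^3$, hence $a_2a_3-a_4=-4\omega_{11}\omega_{13}+\frac{8}{3}\omega_{11}^3-2\omega_{33}$, and after substituting $\omega_{33}=\omega_{15}-\omega_{11}\omega_{13}+\frac13\omega_{11}^3$ from \eqref{e11n} this collapses to
\[
a_2a_3-a_4=2\left(\omega_{11}^3-\omega_{11}\omega_{13}-\omega_{15}\right),
\]
so that $|a_2a_3-a_4|\le 2\left(|\omega_{11}|^3+|\omega_{11}||\omega_{13}|+|\omega_{15}|\right)$; note this is sharp for the Koebe function, where $\omega_{11}=1$ and $\omega_{13}=\omega_{15}=0$. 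If one now proceeds exactly as in Theorems 1 and 2, feeding in only $|\omega_{11}|^2+3|\omega_{13}|^2+5|\omega_{15}|^2\le1$ (the choice $x_1=1$, $x_3=0$ in \eqref{e7}) together with \eqref{e14n}, the problem reduces to maximizing $f_3(x,y)=2x^3+2xy+\frac{2}{\sqrt5}\sqrt{1-x^2-3y^2}$ over the region $E$ of the earlier proofs. Its maximum, however, exceeds $2.17$, so this single-constraint argument is not good enough and one must invoke more of \eqref{e7}.

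The extra leverage comes from also using \eqref{e7} with $x_1=0$, $x_3=1$ (and dropping the nonnegative $\omega_{35}$-term), which gives $|\omega_{13}|^2+3|\omega_{33}|^2\le\frac13$, i.e. $|\omega_{33}|\le\frac13\sqrt{1-3|\omega_{13}|^2}$; by \eqref{e11n} this is an additional quadratic constraint relating $\omega_{15}$, $\omega_{11}\omega_{13}$ and $\omega_{11}^3$. After a rotation normalizing $\omega_{11}\ge0$, I would optimize over the arguments of $\omega_{13}$ and $\omega_{15}$ inside $|\omega_{11}^3-\omega_{11}\omega_{13}-\omega_{15}|$ subject to both inequalities, and thereby reduce the whole question to maximizing an explicit function of the two real variables $x=|\omega_{11}|$ and $y=|\omega_{13}|$ over a planar region now strictly smaller than $E$. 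This final maximization would be carried out exactly as in the preceding theorems: solve the critical-point system (here with a Lagrange multiplier for the second, active constraint), and then separately examine the edges of the region and the arc $y=\frac1{\sqrt3}\sqrt{1-x^2}$. The Koebe point sits on the boundary with value $2$, and the asserted bound $2.10064\ldots$ arises from an interior critical point.

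The main obstacle is that, unlike in Theorems 1 and 2, one constraint is not enough (it overshoots to above $2.17$), so the delicate part is the combined use of the full Grunsky data: first the argument optimization inside $|\omega_{11}^3-\omega_{11}\omega_{13}-\omega_{15}|$, where the arguments that make this quantity largest work against the ones that leave the most slack in the $|\omega_{33}|$-bound, and then the resulting two-constraint planar optimization, which is genuinely heavier than the one-constraint problems solved above. If two specializations of \eqref{e7} still fail to reach $2.10064\ldots$ numerically, the remedy would be to keep $x_3=t$ as a free real parameter in \eqref{e7}, producing a one-parameter family of constraints, and to optimize over $t$ as well — at the price of one additional variable in the extremal problem.
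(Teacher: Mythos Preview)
Your computation of $a_2a_3-a_4=2(\omega_{11}^3-\omega_{11}\omega_{13}-\omega_{15})$ is correct, and you are also right that the crude triangle inequality
\[
|a_2a_3-a_4|\le 2|\omega_{11}|^3+2|\omega_{11}||\omega_{13}|+2|\omega_{15}|
\]
combined with only $|\omega_{11}|^2+3|\omega_{13}|^2+5|\omega_{15}|^2\le1$ overshoots. But the fix in the paper is \emph{not} to pile on more Grunsky constraints or to optimize over arguments with Lagrange multipliers; it is a single algebraic regrouping together with a classical coefficient inequality that you have overlooked. Write
\[
2\omega_{15}+2\omega_{11}\omega_{13}-2\omega_{11}^3
   =2\omega_{15}+\omega_{11}\bigl(2\omega_{13}-\omega_{11}^2\bigr)-\omega_{11}^3,
\]
and observe that $2\omega_{13}-\omega_{11}^2=a_3-a_2^2$, so that the standard estimate $|a_3-a_2^2|\le1$ for $f\in\es$ gives
\[
|a_2a_3-a_4|\le |\omega_{11}|+|\omega_{11}|^3+2|\omega_{15}|
   \le x+x^3+\frac{2}{\sqrt5}\sqrt{1-x^2-3y^2}=:f_3(x,y).
\]
This function is decreasing in $y$, so there is no interior critical point at all; the maximum is on the edge $y=0$, where $f_3(x,0)=x+x^3+\frac{2}{\sqrt5}\sqrt{1-x^2}$ attains $2.10064\ldots$ at $x\approx0.974$.

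So the genuine gap in your proposal is twofold. First, you miss the Fekete--Szeg\H{o}-type input $|a_3-a_2^2|\le1$, which is what actually replaces the bad term $2|\omega_{11}||\omega_{13}|$ by the tame $|\omega_{11}|-|\omega_{11}|^3$ and makes the single Grunsky constraint sufficient. Second, your alternative route --- bringing in the $x_1=0$, $x_3=1$ specialization and then optimizing over arguments subject to two coupled constraints --- is left as an uncomputed program (you yourself hedge with ``If two specializations \ldots\ still fail''), and your claim that $2.10064\ldots$ comes from an interior critical point is incorrect: in the paper the extremum sits on the boundary $|\omega_{13}|=0$.
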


\begin{proof}
Using \eqref{e9} we have
\[|a_2a_3-a_4| = \left|2\omega_{33}+4\omega_{11}\omega_{13}-\frac83\omega_{11}^3\right|,\]
and further by \eqref{e11n}
\be\label{e16n}
\begin{split}
|a_2a_3-a_4|
&= \left|2\omega_{15}+2\omega_{11}\omega_{13}-2\omega_{11}^3\right|\\
&= \left|2\omega_{15}+(2\omega_{13}-\omega_{11}^2)\omega_{11}-\omega_{11}^3\right|\\
&\le 2|\omega_{15}|+|2\omega_{13}-\omega_{11}^2||\omega_{11}|+|\omega_{11}|^3.
\end{split}
\ee
Since $|2\omega_{13}-\omega_{11}^2|=|a_3-a_2^2|\le1$ (see \cite[p.5]{DTV-book}) for the class $\es$ and using \eqref{e14n}, from \eqref{e16n} we obtain
\[ |a_2a_3-a_4| \le x+x^3+\frac{2}{\sqrt5}\sqrt{1-x^2-3y^3} \equiv f_3(x,y),\]
where we put $|\omega_{11}|=x$, $|\omega_{13}|=y$ and $(x,y)\in \{(x,y):0\le x\le1, 0\le y \le \frac{1}{\sqrt3} \sqrt{1-x^2}\}\equiv E_3$. Since $\frac{\partial f_3}{\partial x} = \frac{-6y}{\sqrt5\sqrt{1-x^2-3y^2}}= 0$ if, and only if, $y=0$, we realize that  $f_3$ has no singular points inside $E_3$. On the boundary we have
\begin{itemize}
  \item[-] $f_3(x,0)=x+x^3+\frac{2}{\sqrt5}\sqrt{1-x^2}\le 2.10064\ldots$ for $0\le x\le1$, obtained for $x=0.9740\ldots$;
  \item[-] $f_3(0,y)=\frac{2}{\sqrt5}\sqrt{1-3y^2}\le \frac{2}{\sqrt5}<1$ for $0\le y\le1/\sqrt3$;
  \item[-] $f_3(1,y)=f_3(1,0)=2$;
  \item[-] $f_3\left(x,\frac{1}{\sqrt3}\sqrt{1-x^2}\right) = x+x^3\le 2$ for $0\le x\le1$.
\end{itemize}

\medskip

Finally
\[|a_2a_3-a_4| \le 2.10064\ldots.\]
\end{proof}

\medskip

\begin{rem}
We believe that $|a_2a_3-a_4| \le 2$ is true for the class $\es$.
\end{rem}

\medskip

\section{The third logarithmic coefficient}

\medskip

We now give upper bound of the third logarithmic coefficient over the class $\es$.

\medskip

\begin{thm}
Let $f\in\es$ and be given by \eqref{e1}. Then
\[|\gamma_3| \le 0.5566178\ldots.\]
\end{thm}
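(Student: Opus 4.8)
The plan is to express $\gamma_3$ in terms of the Grunsky coefficients of $f_2$ and then optimize using the Grunsky inequality \eqref{e7}. Starting from \eqref{log-co-1}, expanding $\log(f(z)/z) = 2\gamma_1 z + 2\gamma_2 z^2 + 2\gamma_3 z^3 + \cdots$ and matching coefficients with the power series $f(z)/z = 1 + a_2 z + a_3 z^2 + \cdots$, one gets the standard formulas
\[
\gamma_1 = \tfrac12 a_2, \quad \gamma_2 = \tfrac12\bigl(a_3 - \tfrac12 a_2^2\bigr), \quad \gamma_3 = \tfrac12\bigl(a_4 - a_2 a_3 + \tfrac13 a_2^3\bigr).
\]
Next I would substitute the relations \eqref{e9} expressing $a_2, a_3, a_4$ through $\omega_{11}, \omega_{13}, \omega_{33}$. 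After the substitution I expect massive cancellation: since $a_2 = 2\omega_{11}$ and the cubic and mixed terms in $\gamma_3$ are designed to kill the pure powers of $\omega_{11}$, the result should collapse to something clean like $\gamma_3 = \omega_{33} + (\text{small correction in }\omega_{11}\omega_{13})$ or even just a combination of $\omega_{33}$ and $\omega_{11}\omega_{13}$. I would also use the last identity in \eqref{e9}, namely $3\omega_{33} = 3\omega_{15} - 3\omega_{11}\omega_{13} + \omega_{11}^3$, to trade $\omega_{33}$ for $\omega_{15}$ if that produces a more symmetric expression amenable to \eqref{e7}.

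Once $\gamma_3$ is written as a linear combination of $\omega_{11}, \omega_{13}, \omega_{15}, \omega_{33}$ (with coefficients that may themselves involve $\omega_{11}$), the strategy is to bound $|\gamma_3|$ by choosing the free parameters $x_1, x_3$ in \eqref{e7} cleverly. The left side of \eqref{e7} controls weighted sums of $\omega_{11}x_1 + \omega_{31}x_3$, $\omega_{13}x_1 + \omega_{33}x_3$, and $\omega_{15}x_1 + \omega_{35}x_3$; by Cauchy--Schwarz applied to a suitable linear functional of these three vectors one obtains an upper bound for any fixed target linear combination of the $\omega$'s in terms of $|x_1|^2 + |x_3|^2/3$. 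Setting $t = |\omega_{11}| \in [0,1]$ (recall $|a_2| \le 2$ so $|\omega_{11}| \le 1$) and treating the coefficient of each $\omega$ term as a function of $t$, the bound on $|\gamma_3|$ becomes $\sup_{t \in [0,1]} \Phi(t)$ for an explicit function $\Phi$ built from the optimal $x_1, x_3$. The numerical value $0.5566178\ldots$ should emerge as $\max_{t} \Phi(t)$, attained at some interior critical point.

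The main obstacle will be organizing the optimization over $x_1, x_3$ and over $t = |\omega_{11}|$ so that the estimates stay sharp rather than merely valid. The Grunsky inequality gives a quadratic constraint, and naively bounding term-by-term loses too much — the improvement over the previous $0.7688\ldots$ comes precisely from using the cross terms $\omega_{11}x_3$ etc.\ efficiently. Concretely, after fixing the form of $\gamma_3$ one must pick $x_3$ as a function of $x_1$ and $t$ (or pick a ratio) to make the Cauchy--Schwarz step tight, then differentiate the resulting one-variable function of $t$; the critical-point equation will be transcendental and solved numerically. A secondary subtlety is keeping track of which of $\omega_{13}, \omega_{15}, \omega_{33}$ to eliminate using the side relation in \eqref{e9} so that only two independent ``directions'' remain, matching the two free parameters $x_1, x_3$. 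I would carry out the algebra, reduce to a scalar maximization, and verify the claimed constant numerically.
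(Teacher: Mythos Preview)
Your plan is on the right track and the opening algebra is exactly what the paper does: one computes $\gamma_3=\tfrac12(a_4-a_2a_3+\tfrac13 a_2^3)$, substitutes \eqref{e9}, and after using the last identity $\omega_{33}=\omega_{15}-\omega_{11}\omega_{13}+\tfrac13\omega_{11}^3$ arrives at the clean formula
\[
\gamma_3=\omega_{15}+\omega_{11}\omega_{13}+\tfrac13\omega_{11}^3.
\]
Where you diverge from the paper is in the optimization. You propose to exploit both free parameters $x_1,x_3$ in \eqref{e7} and apply Cauchy--Schwarz to a linear functional of the three Grunsky rows, arguing that ``the improvement over $0.7688\ldots$ comes precisely from using the cross terms $\omega_{11}x_3$ etc.\ efficiently.'' That diagnosis is off. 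The paper takes the most naive choice, $x_1=1$, $x_3=0$, which yields simply $|\omega_{11}|^2+3|\omega_{13}|^2+5|\omega_{15}|^2\le 1$; it then bounds $|\gamma_3|\le \tfrac13|\omega_{11}|^3+|\omega_{11}||\omega_{13}|+|\omega_{15}|$ term by term and maximizes the two-variable function
\[
f_1(x,y)=\tfrac13x^3+xy+\tfrac{1}{\sqrt5}\sqrt{1-x^2-3y^2}
\]
over $\{0\le x\le 1,\ 0\le y\le \tfrac{1}{\sqrt3}\sqrt{1-x^2}\}$. The interior critical point $(0.81267\ldots,0.24353\ldots)$ gives the value $0.5566178\ldots$. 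No parameter $x_3$, no Cauchy--Schwarz step. Your more elaborate scheme would (if carried through correctly, fixing $t=|\omega_{11}|$ and applying Cauchy--Schwarz to $t|\omega_{13}|+|\omega_{15}|$ against the quadratic constraint) produce the same number, since that Cauchy--Schwarz is saturated exactly at the paper's critical point; but the extra machinery with $x_3$ is unnecessary and your proposal as written does not yet contain the actual computation. Carry out the substitution explicitly to get the formula for $\gamma_3$ above, set $x_3=0$, and do the straightforward two-variable calculus.
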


\medskip

\begin{proof}
From \eqref{log-co-1}, after differentiation and comparation of coefficients we receive
\[\gamma_3 =\frac12\left(a_4-a_2a_3+\frac13a_2^3\right).\]
The fifth relation in \eqref{e9} gives
\be\label{e11n}\omega_{33} =  \omega_{15}-\omega_{11}\omega_{13}+\frac13\omega_{11}^3,  \ee
which, together with the other expressions from \eqref{e9} implies
\[\gamma_3=\omega_{33}+2\omega_{11}\omega_{13} = \omega_{15}+\omega_{11}\omega_{13}+\frac13\omega_{11}^3.\]
Therefore,
\be\label{e12n}
|\gamma_3| \le \frac13|\omega_{11}|^3+|\omega_{11}||\omega_{13}|+|\omega_{15}|.
\ee
Now, choosing $x_1=1$ and $x_3=0$ in \eqref{e7} we have
\[ |\omega_{11} |^2 +3|\omega_{13} |^2 + 5|\omega_{15}|^2 \le 1, \]
and also from here
\[ |\omega_{11} |^2 +3|\omega_{13} |^2  \le 1. \]
The last two relations imply
\be\label{e14n}
|\omega_{13}|\le\frac{1}{\sqrt{3}}\sqrt{1-|\omega_{11}|^2} \quad \mbox{and}\quad |\omega_{15}|\le\frac{1}{\sqrt{5}}\sqrt{1-|\omega_{11}|^2-3|\omega_{13}|^2}.
\ee
Using \eqref{e12n} and \eqref{e14n} we have
\[
|\gamma_3| \le \frac13|\omega_{11}|^3+|\omega_{11}||\omega_{13}|+\frac{1}{\sqrt{5}}\sqrt{1-|\omega_{11}|^2-3|\omega_{13}|^2}\equiv f_1(|\omega_{11}|,|\omega_{13}|),
\]
where
\[ f_1(x,y) = \frac13x^3+xy+\frac{1}{\sqrt{5}}\sqrt{1-x^2-3y^2}\]
and $0\le x\le1$, $0\le y \le \frac{1}{\sqrt3} \sqrt{1-x^2}$ ($|a_2|=|2\omega_{11}|\le2$ implies $0\le|\omega_{11}|\le1$).

\medskip

So, we need to find maximum of the function $f_1$ over the region $E_1=\Big\{ (x,y):0\le x\le1, 0\le y \le \frac{1}{\sqrt3} \sqrt{1-x^2} \Big\}$.

\medskip

The system
\[
\begin{cases}
\frac{\partial f_1}{\partial x} = x^2+y-\frac{x}{\sqrt5 \sqrt{1-x^2-3y^2}} = 0 \\[2mm]
\frac{\partial f_1}{\partial y} = x-\frac{3y}{\sqrt5 \sqrt{1-x^2-3y^2}} = 0
\end{cases},
\]
has only one solution in the interior of $E_1$, that is $(x_1,y_1)=(0.81267\ldots, 0.243532\ldots)$ such that $f_1(x_1,y_1) = 0.5566178\ldots$.

\medskip
Now, let consider the function $f_1$ on the boundary of $E_1$:
\begin{itemize}
  \item[-] $f_1(x,0)=\frac13x^3+\frac{1}{\sqrt5}\sqrt{1-x^2}\le \frac{1}{\sqrt5}=0.4472\ldots$ for $0\le x\le1$, with maximum obtained for $x=0$;
  \item[-] $f_1(0,y)=\frac{1}{\sqrt5}\sqrt{1-3y^2}\le \frac{1}{\sqrt5}=0.4472\ldots$ for $0\le y\le1/\sqrt3$, with maximum obtained for  $y=0$;
  \item[-] $f_1(1,y)=f_1(1,0)=\frac13$;
  \item[-] $f_1\left(x,\frac{1}{\sqrt3}\sqrt{1-x^2}\right) = \frac13x^3+\frac{1}{\sqrt3}x\sqrt{1-x^2}\le \frac{1}{\sqrt5}=0.4472\ldots$ for $0\le x\le1$, with maximum obtained for  $x=0.898344\ldots$.
\end{itemize}

\medskip

Summarizing the above analysis brings the conclusion that
\[|\gamma_3| \le f_1(x_1,y_1)=0.5566178\ldots.\]
\end{proof}

\medskip

\section{The second Hankel determinant for logarithmic coefficients}

\begin{thm}
Let $f\in\es$ is given by \eqref{e1}. Then
\[|H_{2,1}(F_f/2)| = \left| \gamma_1\gamma_3-\gamma_2^2 \right| \le \frac13.\]
\end{thm}

\begin{proof}
For a function $f$ from $\es$, using \eqref{loghank} and \eqref{e9}, we receive
\begin{equation}\label{eq-5n}
\gamma_1\gamma_3-\gamma_2^2 = \omega_{11}\omega_{33} + \omega_{11}^2\omega_{13} - \omega_{13}^2 - \frac14\omega_{11}^4,
\end{equation}
and from the last relation in \eqref{e9},
\[ \omega_{33} = \omega_{15} - \omega_{11}\omega_{13} +\frac13\omega_{11}^3. \]
So,
\[  \gamma_1\gamma_3-\gamma_2^2 = \omega_{11}\omega_{15} - \omega_{13}^2 + \frac{1}{12}\omega_{11}^4,  \]
and further,
\[
\begin{split}
\left| \gamma_1\gamma_3-\gamma_2^2 \right| & \le |\omega_{11}| |\omega_{15}| + |\omega_{13}|^2 + \frac{1}{12}|\omega_{11}|^4\\
& \le \frac{1}{\sqrt5} |\omega_{11}|\sqrt{1-|\omega_{11}|^2-3|\omega_{13}|^2} + |\omega_{13}|^2 + \frac{1}{12}|\omega_{11}|^4 \\
& =: f_4(|\omega_{11}|,|\omega_{13}|)
\end{split}
 \]
where
\[f_4(x,y) = \frac{1}{\sqrt5} x\sqrt{1-x^2-3y^2} + y^2 + \frac{1}{12}x^4 \]
and $(x,y) \in  E_4 \equiv \left\{ (x,y) : 0\le x\le1, 0\le y\le \frac{1}{\sqrt3}\sqrt{1-x^2}\right\}$.

Now, for the first partial derivatives of $f_4$ we have $\frac{\partial f_4}{\partial x} = \frac{x^3}{3}-\frac{x^2}{\sqrt{5} \sqrt{-x^2-3 y^2+1}}+\frac{\sqrt{1-x^2-3 y^2}}{\sqrt{5}}$ and $\frac{\partial f_4}{\partial y} = 2 y-\frac{3 x y}{\sqrt{5} \sqrt{1-x^2-3 y^2}}$. The second one being zero implies $\sqrt{1-x^2-3 y^2} = \frac{3 x}{2 \sqrt{5}}$, which brought in the first gives $\frac{\partial f_4}{\partial x} = \frac{x^3}{3}-\frac{11 x}{30}$. It is easy to check that $\frac{x^3}{3}-\frac{11 x}{30}$ is negative for $0\le x\le1$. Thus, $f_4$ has no critical points in the interior of $E_4$.

\medskip

On the boundary of $E_4$ we have
\begin{itemize}
  \item[-] $f_4(x,0)= \frac{x^4}{12}+\frac{x\sqrt{1-x^2} }{\sqrt{5}} \le f_4(0.78167\ldots, 0) = 0.2491\ldots $ for $0\le x\le1$;
  \item[-] $f_4(0,y)= y^2 \le \frac13(1-x^2)\le\frac13$;
  \item[-] $f_4(1,y)=f_4(1,0)=\frac{1}{12}$;
  \item[-] $f_4\left(x,\frac{1}{\sqrt3}\sqrt{1-x^2}\right) = \frac{1}{12} \left(2-x^2\right)^2 \le \frac13$ for $0\le x\le1$.
\end{itemize}

\medskip

Finally, we conclude that
\[\left| \gamma_1\gamma_3-\gamma_2^2 \right| \le \frac13.\]
\end{proof}

\medskip

\begin{rem}
The estimate from the previous theorem is probably not sharp since the natural way for a function $f$ from $\mathcal{S}$ to reach equality sign in the estimate is to satisfy $x=|\omega_{11}|=0$ and $y=|\omega_{13}|=\frac{1}{\sqrt3}$, leading to $a_2=0$ and $a_3=\frac{2}{\sqrt3}=1.1547\ldots$. This is in contradiction with the well known result that for functions $f\in\mathcal{S}$, $|a_3-a_2^2|\le1$, reducing in our case to $|a_3|\le1$.
\end{rem}

\vspace{5mm}


\begin{thebibliography}{99}

\bibitem{cho}
N.E. Cho, B. Kowalczyk, O.S. Kwon et al. \emph{On the third logarithmic coefficient in some subclasses of close-to-convex functions}. Revista de la Real Academia de Ciencias Exactas, F\'{i}sicas y Naturales. Serie A. Matem\'{a}ticas,  114, 52 (2020).

\bibitem{Bra85} L. De Branges, \emph{A proof of the Bieberbach conjecture}, Acta Math. {\bf 154} (1985), no. 1--2, 137--152.


\bibitem{duren}
P.L. Duren, Univalent function, Springer-Verlag, New York, 1983.



\bibitem{hayman-68}
W.K. Hayman, On the second Hankel determinant of mean univalent functions, \textit{Proc. London Math. Soc.} \textbf{3}(18) (1968), 77-–94.



\bibitem{janteng-06}
A. Janteng, S.A. Halim, M. Darus, Coefficient inequality for a function whose derivative has a positive real part. \textit{J. Inequal. Pure Appl. Math.} \textbf{7}(2) (2006), Article 50, 5 pp.

\bibitem{janteng-07}
A. Janteng,  S.A. Halim, M. Darus, Hankel determinant for starlike and convex functions, \textit{Int. J. Math. Anal. (Ruse).} \textbf{1}(13-16) (2007), 619–-625.

\bibitem{Kowalczyk-22}
B. Kowalczyk,  A. Lecko, Second Hankel determinant of logarithmic coefficients of convex and starlike functions, \textit{Bull. Aust. Math. Soc.} \textbf{105}(3) (2022), 458--467.


\bibitem{Kowalczyk-23}
B. Kowalczyk, A. Lecko, The second Hankel determinant of the logarithmic coefficients of strongly starlike and strongly convex functions, {\it Rev. Real Acad. Cienc. Exactas Fis. Nat. Ser. A-Mat.},  {\bf117}, 91 (2023).

\bibitem{Lebedev}
N.A. Lebedev, Area principle in the theory of univalent functions, Published by "Nauka", Moscow, 1975 (in Russian).

\bibitem{OT-Turk}
M. Obradovi\`{c}, N. Tuneski, The third logarithmic coefficient for the class S, {\it Turkish Journal of Mathematics}, {\bf44} (2020), 1950--1954.


\bibitem{OB-NT-n2}
M. Obradovi\'{c}, N. Tuneski, Zalcman and generalized Zalcman conjecture for a subclass of univalent functions, {\it Novi Sad J. of Mathematics}, {\bf52}(1) (2022), 185--190.


\bibitem{ravi}
V. Ravichandran, S. Verma,  Generalized Zalcman conjecture for some classes of analytic functions, {\it  J. Math. Anal. Appl.} {\bf450}(1) (2017), 592--605.


\bibitem{DTV-book}
D.K. Thomas, N. Tuneski, A. Vasudevarao, Univalent Functions: A Primer, \emph{De Gruyter Studies in Mathematics} {\bf 69}, De Gruyter, Berlin, Boston, 2018.



\end{thebibliography}
\end{document}